\documentclass[11pt,a4paper]{article}

\usepackage[utf8]{inputenc}
\usepackage{amsmath}
\usepackage{amssymb}
\usepackage{amsthm}
\usepackage{hyperref}

\usepackage{orcidlink}
\usepackage{authblk}

\newcommand{\T}{\mathbb{T}}

\def\sub{\subseteq}

\def\LebT{\tau}

\def\lpar{\left(}
\def\rpar{\right)}
\def\into{\longrightarrow}
\def\sub{\subseteq}
\def\setdiff{\backslash}

\def\klim{{\displaystyle\lim_{k\rightarrow\infty}}}

\def\Bbar{\overline{B}}

 \newtheorem{theorem}{Theorem}[section]
 \newtheorem{lemma}[theorem]{Lemma}
  \newtheorem{proposition}[theorem]{Proposition}
 \newtheorem{corollary}[theorem]{Corollary}

\title{Sufficient Conditions for Strong Natural Boundaries involving Lacunary Sequences}
\author{Rafael Reno S. Cantuba\thanks{Associate Professor, Department of Mathematics and Statistics, De La Salle University (DLSU), Taft Ave., Manila, Philippines, supported by a grant from the Reserach Grants Management Office of DLSU, grant no.: 02FR1TAY24-1TAY25, email: rafael.cantuba@dlsu.edu.ph}\\ ORCID: 0000-0002-4685-8761\orcidlink{0000-0002-4685-8761}}
\date{}

\begin{document}

\maketitle

\begin{abstract} The notion of a regular point is extended to that of a weak regular point, and so the notion of a strong natural boundary may be described locally as the absence of weak regular points, just as a natural boundary is locally the absence of regular points. The main result is a sufficient condition for a strong natural boundary, which is the conjunction of the unboundedness of the coefficients of the underlying power series with either of two specific gap density conditions imposed on the exponents that yields a lacunary sequence.
\end{abstract}

\vspace{0.5em}
\noindent
\begin{quote}
    \small
    \textbf{Keywords:} natural boundary, regular point, Hadamard gap theorem, Fabry gap theorem, Tur\'an lemma, zero harmonic density, zero density
    
    \vspace{0.5em}
    \textbf{2020 Mathematics Subject Classification:} 30B10 (primary);   42A05, 43A25, 43A46, 30B30 (secondary)
\end{quote}

\section{Introduction}

From at least two important notions in analysis, the motivation for this study is drawn, the first of which, is said to be the opposite of the phenomenon of functions analytic in the whole complex plane: that the functions have no analytic continuation beyond the domain of validity of their original definition \cite[p.~189]{seg08}. This leads us to the gap theorems of complex analysis. The connection to harmonic analysis is made through the power series representations involved, and in particular, the integer exponents in these series. If these integer sequences involve gaps, then we have the notion of lacunary sequence, of which, mathematicians are said to have been long fascinated \cite[p.~1]{gra13}.

A dichotomy is present in the study of these gap theorems in the sense that some theorems are about conditions on the exponents of the underlying power series, while others involve conditions on the coefficients. See, for instance, \cite[Chapter~6]{seg08} or the book \cite{sha68}. Even in some extensions of these gap theorems, the dichotomy persists, such as the extension \cite[Proposition~10.4.7]{gra13} of the Hadamard Gap Theorem, that was done through harmonic analysis, which involves only exponents, and the extension in \cite{bre11} of natural boundaries in which the results deal with conditions only on the coefficients. The latter has been further extended to probabilistic methods \cite{dos26}, but still involving only coefficients for the sufficient conditions. In this work, we prove sufficient conditions for gap theorems that involve both the exponents and coefficients.

The notion of (classical) natural boundary in complex analysis is related to the local notion of a (classical) regular point. Analogous to this, we devise a similar construct, which is that of a weak regular point (defined in Section~\ref{PrelimSec}) that is related to a strong natural boundary. A strong natural boundary is a natural boundary, and a regular point is a weak regular point. In Section~\ref{CoeffSec}, we give an upper bound on the coefficients of the power series in question that depends on the length of some interval and the Fourier-Stieltjes transform of a Radon measure constructed using the said interval. This upper bound is then generalized into an absolute upper bound if all elements of the unit circle are weak regular points of the series under consideration. The Nazarov-Tur\'an Lemma is used in Section~\ref{TuranSec} to show that the integral of the absolute value of the analytic function in question along an arbitrary (scaled) arc of the unit circle is bounded above by the corresponding integral for a fixed arc, but with a factor that may still increase without bound. This turns out to be a very important lemma since, when the gap conditions on the exponents of the power series are introduced in Section~\ref{GapSec}, the said factor stabilizes into a constant, and so, the series exhibits the said bound in all directions around the unit circle. This is specifically for the case when the gap condition is that of having zero density. For the other gap condition, we make use of interpolation by point mass measures. Finally, we make use of all the important lemmas and state in Section~\ref{ThmSec} our main result, which is that a sufficient condition for a strong natural boundary is the conjunction of two conditions: that of the coefficients forming an unbounded sequence, and either zero harmonic density or zero density of the exponents. Some examples are mentioned after to show how only one of these conditions is not sufficient for a strong natural boundary. We propose further directions that involve the theory of Hardy spaces and Smirnov spaces.

\section{Preliminaries}\label{PrelimSec}

Let $(a_k)$ be a sequence of complex numbers and let $(n_k)$ be a strictly increasing sequence of positive integers. Each $n_k$, because of some isomorphism of topological groups, may also be viewed as the character $e^{i\theta}\mapsto e^{in_k\theta}$ of the compact group $\T$ of all complex numbers with modulus $1$. Consider a power series $f(z)=\sum_{k=1}^\infty a_kz^{n_k}$ defined if $|z|<1$.

Given $\varepsilon>0$ and a complex number $c$, the open (respectively, closed) disk with radius $\varepsilon$ centered at $c$ shall be denoted by $B(c,\varepsilon)$ (respectively, $\Bbar(c,\varepsilon)$). By an ``arc'' on or along $\T$ we shall mean the intersection of $\T$ with $B(e^{i\alpha},\varepsilon)$ for some $e^{i\alpha}\in\T$ and some $\varepsilon>0$. 
If there exists an arc $I=\T\cap B(e^{i\alpha},\varepsilon)$ such that
\begin{equation}
\sup_{0<r<1}\int_{e^{i\theta}\in I}|f(re^{i\theta})|\ \frac{d\theta}{2\pi}<\infty,\label{Mdef}
\end{equation}
then we shall refer to $e^{i\alpha}\in I\sub T$ as a \emph{weak regular  point} of $f$. The arc $\T\cap B(e^{i\alpha},\varepsilon)$ and the neighborhood $B(e^{i\alpha},\varepsilon)$ of the weak regular point $e^{i\alpha}$ shall be relevant in nearly all succeeding arguments. Also, if $e^{i\alpha}$ is indeed a weak regular point of $f$, then the function $z\mapsto f(e^{i\alpha}z)$ has $1$ as a weak regular point\footnote{A transformation of the same kind may be used to handle the general case when the power series is defined if $|z|<R$, for some $R>0$. Thus, in this paper, like in one of the main references, which is \cite{bre11}, the development of the  theory starts with an assumption of the power series converging if $|z|<1$.}, and for simplicity, we assume henceforth that if $f$ has a weak regular point at $\T$, then this weak regular point is $1$. The set $I$ in this case is the image $I_\delta$ under $s\mapsto e^{is}$ of some ``symmetric'' interval $(-\delta,\delta)$ for some $\delta>0$, and the radius of the neighborhood $B_\delta:=B(1,\varepsilon)$ of $1$ may be routinely verified to be $\varepsilon=2\left|\sin\frac{\delta}{2}\right|$. Also, \eqref{Mdef} in this case becomes
\begin{equation}
\sup_{0<r<1}\int_{-\delta}^\delta|f(re^{i\theta})|\ \frac{d\theta}{2\pi}<\infty.\label{Mdef2}
\end{equation}

A (classical) regular point of $f$ on $\T$ is an element $e^{i\alpha}$ of $\T$ for which there exists $\delta>0$ such that $f$ has an analytic continuation on $B(e^{i\alpha},\delta)$. Similar to the reasoning done earlier for a weak regular point, if a regular point is needed in an argument, we shall assume without loss of generality that such a regular point is $1$.

\begin{proposition} A regular point is a weak regular point.
\end{proposition}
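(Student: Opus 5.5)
By the rotation discussed above, I will assume the regular point is $1$. So $f$ has an analytic continuation $F$ to $B(1,\rho)$ for some $\rho>0$; that is, $F$ is analytic on $B(1,\rho)$ and agrees with $f$ on $B(1,\rho)\cap\{|z|<1\}$. The plan is to shrink to a compact neighborhood on which $F$ is bounded, pick $\delta$ so that the relevant radial segments stay inside that neighborhood, and then bound the integral in \eqref{Mdef2} by a constant independent of $r$.

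First, I take $\rho'=\rho/2$. Then $\Bbar(1,\rho')\subset B(1,\rho)$ is compact and $F$ is continuous there, so $M:=\max_{\Bbar(1,\rho')}|F|<\infty$.

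Next, I choose $\delta>0$ small and $r_0\in(0,1)$ so that every point $re^{i\theta}$ with $r_0\le r<1$ and $|\theta|<\delta$ lies in $\Bbar(1,\rho')$. This works because
\[
|re^{i\theta}-1|\le (1-r)+|e^{i\theta}-1|\le (1-r_0)+2\left|\sin\tfrac{\delta}{2}\right|,
\]
and I can take $1-r_0<\rho'/2$ and $2|\sin(\delta/2)|<\rho'/2$. For these $r$, the point $re^{i\theta}$ lies in $B(1,\rho)\cap\{|z|<1\}$, where $F=f$. Hence
\[
\int_{-\delta}^{\delta}|f(re^{i\theta})|\,\frac{d\theta}{2\pi}\le \frac{\delta M}{\pi}.
\]

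For $0<r\le r_0$, the closed disk $\Bbar(0,r_0)$ is a compact subset of the disk of convergence. So $|f|\le M_0:=\max_{|z|\le r_0}|f(z)|<\infty$, and the integral is at most $\delta M_0/\pi$.

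Combining the two ranges, the supremum in \eqref{Mdef2} is at most $\delta\max(M,M_0)/\pi<\infty$. The arc $I_\delta$ is an arc in the paper's sense, since it equals $\T\cap B(1,2|\sin(\delta/2)|)$. So $1$ is a weak regular point.

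The only point needing care is the geometric step: showing that the sector-like region $\{re^{i\theta}: r_0\le r<1,\ |\theta|<\delta\}$ sits inside the disk where the continuation is bounded. The triangle inequality estimate above settles it.
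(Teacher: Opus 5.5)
Your proof is correct and takes essentially the same route as the paper: cover the sector $\{re^{i\theta}: 0<r<1,\ |\theta|\le\delta\}$ by a closed disk about $1$, on which the continuation is bounded, together with a compact subdisk of $B(0,1)$, and then bound the integral by $\frac{\delta}{\pi}$ times the resulting maximum. Your triangle-inequality choice of $r_0$ and $\delta$, with slack relative to $\rho'$, is actually more careful than the paper's geometric step: there, taking $R$ merely with $1-\delta<R<1$ does not by itself guarantee $C\subseteq\Bbar(1,2|\sin\frac{\delta}{2}|)\cup\Bbar(0,R)$, and one needs $R\ge 2\cos\delta-1$.
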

\begin{proof}
If $1$ is a regular point of $f$, then $f$ has an analytic continuation on $B(1,2|\sin\delta|)$ for some $\delta>0$. Without loss of generality, we further assume that $0<\delta<1<\frac{\pi}{2}$ so that $\Bbar(1,2|\sin\frac{\delta}{2}|)\sub B(1,2|\sin\delta|)$. Thus, $f$ is analytic and hence continuous on $\Bbar(1,2|\sin\frac{\delta}{2}|)$. From $0<\delta<1$, we obtain $0<1-\delta<1$. Since $1$ is in the closure of $(0,1)$, there exists $R\in(0,1)$ such that $0<1-\delta<R<1$. Since $f$ is analytic on $B(0,1)$, we find that $f$ is continuous on $\Bbar(0,R)\sub B(0,1)$. If $C:=\{re^{i\theta}\  :\  0<r<1,\  -\delta\leq\theta\leq\delta\}$, then $C\sub\Bbar(1,2|\sin\frac{\delta}{2}|)\cup\Bbar(0,R)$, where the set in the right-hand side is a compact set on which $f$ has just been shown to be continuous. Thus, the supremum in 
\begin{flalign}
    && \int_{-\delta}^\delta |f(re^{i\theta})|\frac{d\theta}{2\pi} &\leq\sup_{re^{i\theta}\in C}|f(re^{i\theta})|\int_{-\delta}^\delta\frac{d\theta}{2\pi}=\frac{\delta}{\pi}\sup_{re^{i\theta}\in C}|f(re^{i\theta})|,& (0<r<1),\nonumber
\end{flalign}
is finite. We obtain \eqref{Mdef2}, and this completes the proof.  
\end{proof}
In other terminology, if by $f$ having $\T$ as a \emph{strong natural boundary} we mean that every element of $\T$ is not a weak regular point of $f$, then $\T$ is a (classical) natural boundary of $f$ (meaning every element of $\T$ is not a regular point of $f$). The notion of strong natural boundary was introduced in \cite[p.~4905]{bre11} for the case when the coefficient sequence $(a_k)$ of $f$ is a bounded sequence of complex numbers. In this work, we do not make such an assumption, as the results are about sufficient conditions for $\T$ to be a strong natural boundary that involve the boundedness (or lack thereof) of the power series coefficients together with some gap assumptions on the exponents $n_k$. To the best of our knowledge, in most of the literature on classical natural boundaries, these two categories of conditions are treated separately, and in this work, we show that a combination of these two types of conditions involve strong natural boundaries. In particular, we give a discussion in the next section about weak regular points and the aforementioned condition on coefficients.

\section{Boundedness of coefficients}\label{CoeffSec}

For the weak regular point $1$ of $f$, the image $U(1)$ of a fixed subinterval $(a,b) \subseteq (-\delta,\delta) $ under the function $t \mapsto e^{it}$ is strictly within the arc $I_\delta$. Let $\chi_{U(1)}$ denote the characteristic function or (set) indicator function of $U(1)$. For a fixed $r \in (0,1)$ and $x \in [-\pi, \pi)$, the function $e^{i\theta} \mapsto f(re^{i(\theta+x)})$ is in the function space $C(\T)$ of all complex-valued continuous functions on $\T$, and if we denote Lebesgue measure on $\T$ by $\LebT$, then the function $e^{i\theta}\mapsto f(re^{i(\theta+x)})\chi_{U(1)}(e^{i\theta})$ is in $L^{1}(\LebT)$. By one form of the Riesz Representation Theorem, there exists a Radon measure $\iota_x$ in the Banach space $M(\T)$, the topological dual of $C(\T)$, such that $g\in C(\T)$ implies $\int_\T g\  d\iota_x= \int_{-\pi}^\pi g(e^{i\theta})f(re^{i(\theta+x)})\chi_{U(1)}(e^{i\theta})\  \frac{d\theta}{2\pi}$. The Fourier-Stieltjes transform of $\iota_x$ is the complex-valued function $\widehat{\iota_x}$ on the dual of the compact group $\T$ defined by $\widehat{\iota_x}(n_k) = \int_{\T} e^{-in_k\theta} d\iota_x(e^{i\theta})$.

\begin{proposition}\label{coeffProp}
If $1$ is a weak regular point of $f$, then for each positive integer $k$, 
\[
a_k = \frac{2\pi}{r^{n_k}(b-a)} \int_{-\pi}^\pi \widehat{\iota_x}(n_k) e^{-in_k x} \frac{dx}{2\pi}.
\]
\end{proposition}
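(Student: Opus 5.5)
The plan is to compute the right-hand side directly by unwinding the definition of $\iota_x$, exchanging the order of integration, and extracting the coefficient $a_k$ through orthogonality of characters. For fixed $r\in(0,1)$ and $x\in[-\pi,\pi)$, the character $e^{i\theta}\mapsto e^{-in_k\theta}$ lies in $C(\T)$. The defining property of $\iota_x$ given by the Riesz Representation Theorem therefore yields
\[
\widehat{\iota_x}(n_k)=\int_{-\pi}^{\pi} e^{-in_k\theta} f(re^{i(\theta+x)})\chi_{U(1)}(e^{i\theta})\,\frac{d\theta}{2\pi}=\int_a^b e^{-in_k\theta} f(re^{i(\theta+x)})\,\frac{d\theta}{2\pi}.
\]
Multiplying by $e^{-in_kx}$ combines the exponentials into $e^{-in_k(\theta+x)}$. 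We then integrate in $x$ over $[-\pi,\pi)$ against $\frac{dx}{2\pi}$.

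The second step is to apply Fubini's theorem to the double integral
\[
\int_{-\pi}^{\pi}\int_a^b e^{-in_k(\theta+x)}f(re^{i(\theta+x)})\,\frac{d\theta}{2\pi}\,\frac{dx}{2\pi}.
\]
This is legitimate because, for fixed $r<1$, $f$ is continuous on the compact disk $\Bbar(0,r)$, so the integrand is bounded on the compact rectangle $[a,b]\times[-\pi,\pi]$. After swapping, we fix $\theta$ and substitute $y=\theta+x$. By $2\pi$-periodicity the inner integral becomes $\int_{-\pi}^{\pi}e^{-in_ky}f(re^{iy})\frac{dy}{2\pi}$, which does not depend on $\theta$.

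The third step evaluates this inner integral. The series $f(re^{iy})=\sum_j a_j r^{n_j}e^{in_jy}$ converges uniformly in $y$ for fixed $r<1$, so we may integrate term by term. Orthogonality of the characters, together with the strict increase of $(n_j)$ (which makes the exponents distinct), leaves exactly $a_k r^{n_k}$. The outer integral over $\theta\in(a,b)$ then contributes the factor $\frac{b-a}{2\pi}$, giving
\[
\int_{-\pi}^{\pi}\widehat{\iota_x}(n_k)e^{-in_kx}\,\frac{dx}{2\pi}=\frac{(b-a)\,r^{n_k}a_k}{2\pi}.
\]
Solving for $a_k$ gives the stated formula. The division is valid since $r>0$ and $b>a$.

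I do not expect a genuine obstacle here. The only points needing care are the justifications for the interchanges (Fubini's theorem and term-by-term integration), and both follow from uniform convergence on $|z|\le r$. The weak regularity hypothesis on $1$ is not really used in the identity itself. It serves only to place $(a,b)$ inside $(-\delta,\delta)$, which prepares for the later estimate of $|\widehat{\iota_x}(n_k)|$ via \eqref{Mdef2}. The identity also holds for every $r\in(0,1)$, which is what will later permit letting $r\to1^-$.
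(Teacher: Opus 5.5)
Your proposal is correct and follows essentially the same route as the paper. It unwinds the Riesz definition of $\iota_x$, interchanges the integrals using Fubini and term-by-term integration justified by uniform convergence on $|z|=r$, and extracts $a_k r^{n_k}$ by orthogonality of characters. The only cosmetic difference is that you substitute $y=\theta+x$ to recognize the inner integral as the $n_k$-th Fourier coefficient of $f(re^{i\cdot})$ before expanding the series, whereas the paper expands first and then applies orthogonality in $x$ directly.
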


\begin{proof}
By the definition of the measure $\iota_x$ and of Fourier-Stieltjes transform,
\begin{equation}
\widehat{\iota_x}(n_k) = \int_{a}^{b} f(re^{i(\theta+x)})e^{-in_k\theta}\ \frac{d\theta}{2\pi}.\label{FS_trans}
\end{equation}
Multiplying both sides of \eqref{FS_trans} by $e^{-in_k x}$ and integrating with respect to $x$ over $\T$,
\[
\int_{-\pi}^\pi \widehat{\iota_x}(n_k) e^{-in_k x} \frac{dx}{2\pi} = \int_{-\pi}^\pi \left( \int_{a}^{b} f(re^{i(\theta+x)}) e^{-in_k\theta} \ \frac{d\theta}{2\pi} \right) e^{-in_k x} \frac{dx}{2\pi},
\]
 to which we substitute the power series expansion for $f(re^{i(\theta+x)})$, which results to
\[
\int_{-\pi}^\pi \widehat{\iota_x}(n_k) e^{-in_k x} \frac{dx}{2\pi} = \int_{-\pi}^\pi \left( \int_{a}^{b} \sum_{m=1}^\infty a_m r^{n_m} e^{in_m(\theta+x)} e^{-in_k\theta} \ \frac{d\theta}{2\pi} \right) e^{-in_k x} \frac{dx}{2\pi}.
\]
Since $r \in (0,1)$ is fixed and $f$ is analytic on the open disk $B(0,1)$, the power series expansion $\sum_{m=1}^\infty a_m z^{n_m}$ converges absolutely and uniformly on the (compact) boundary of the disk $B(0,r)\sub B(0,1)$. Consequently, the series for $f(re^{i(\theta+x)})$ converges absolutely and uniformly with respect to $(\theta, x) \in [a,b] \times [-\pi, \pi)$. Due to this uniform convergence, term-by-term integration is justified, and Fubini's theorem may be used to obtain
\begin{align*}
\int_{-\pi}^\pi \widehat{\iota_x}(n_k) e^{-in_k x} \frac{dx}{2\pi} &= \sum_{m=1}^\infty a_m r^{n_m} \int_{-\pi}^\pi \left( \int_{a}^{b} e^{in_m\theta} e^{in_mx} e^{-in_k\theta} e^{-in_k x} \ \frac{d\theta}{2\pi} \right) \frac{dx}{2\pi}, \\
&= \sum_{m=1}^\infty a_m r^{n_m} \left( \int_{a}^{b} e^{i(n_m - n_k)\theta}\ \frac{d\theta}{2\pi} \right) \left( \int_{-\pi}^\pi e^{i(n_m - n_k)x} \frac{dx}{2\pi} \right).
\end{align*}
We now evaluate the integral over the parameter $x$. Because $(n_k)$ is a strictly increasing sequence of positive integers, the orthogonality of characters on $\T$ implies that the integral $\int_{-\pi}^\pi e^{i(n_m - n_k)x} \frac{dx}{2\pi}$ evaluates to $1$ if $m = k$, and $0$ if $m \neq k$. Thus, 
\[
\int_{-\pi}^\pi \widehat{\iota_x}(n_k) e^{-in_k x} \frac{dx}{2\pi} = a_k r^{n_k} \left( \int_{a}^{b} 1\ \frac{d\theta}{2\pi} \right) \cdot 1 = a_k r^{n_k} \frac{b-a}{2\pi}.
\]
Isolating $a_k$ yields the desired equation.
\end{proof}

The \emph{total variation (norm)} of a Radon measure $\nu\in M(\T)$ is $\|\nu\|_{M(\T)}:=|\nu|(\T)$. The identity in Proposition~\ref{coeffProp} for the coefficients of the power series for $f$ suggests a natural bound in terms of the total variation norm of the measure $\iota_x$. More precisely, we have the following.

\begin{corollary}\label{CoeffCor}
If $1$ is a weak regular point of $f$, then for each positive integer $k$,
\[
|a_k| \le \frac{2\pi}{b-a} \sup_{e^{ix}\in\T} \|\iota_x\|_{M(\mathbb{T})}=\frac{2\pi}{b-a} \sup_{e^{ix}\in\T}\int_{a+x}^{b+x}|f(re^{i\vartheta})|\frac{d\vartheta}{2\pi}.
\]
\end{corollary}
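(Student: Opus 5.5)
The plan is to start from the identity of Proposition~\ref{coeffProp} and estimate its right-hand side directly. First I will bound the Fourier--Stieltjes transform pointwise in $x$. For any $\nu\in M(\T)$ and any character,
\[
|\widehat{\nu}(n_k)|=\Bigl|\int_\T e^{-in_k\theta}\,d\nu\Bigr|\le\int_\T|e^{-in_k\theta}|\,d|\nu|=\|\nu\|_{M(\T)} .
\]
Applying this to $\nu=\iota_x$ gives $|\widehat{\iota_x}(n_k)e^{-in_kx}|\le\|\iota_x\|_{M(\T)}\le\sup_{e^{ix}\in\T}\|\iota_x\|_{M(\T)}$. I then integrate this over $x\in[-\pi,\pi)$ against $dx/(2\pi)$, a probability measure, so the integral is bounded by the supremum. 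This yields
\[
|a_k|\le\frac{2\pi}{r^{n_k}(b-a)}\sup_{e^{ix}\in\T}\|\iota_x\|_{M(\T)} .
\]

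Next I will identify the total variation. Since $\iota_x$ has density $f(re^{i(\theta+x)})\chi_{U(1)}(e^{i\theta})$ with respect to $d\theta/2\pi$, a standard fact about absolutely continuous measures gives $|\iota_x|=|f(re^{i(\theta+x)})|\chi_{U(1)}\,d\theta/2\pi$. Hence
\[
\|\iota_x\|_{M(\T)}=\int_a^b|f(re^{i(\theta+x)})|\frac{d\theta}{2\pi}.
\]
The substitution $\vartheta=\theta+x$ then gives the integral over $(a+x,b+x)$ appearing in the statement.

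The main obstacle is the factor $r^{-n_k}$, which the statement does not display. It cannot simply be dropped for a fixed $r$: for $f(z)=z$ the right-hand side equals $r<1=|a_1|$. So I will read the bound as holding in the limit $r\to1^-$. For fixed $k$ we have $r^{-n_k}\to1$, so taking $\limsup_{r\to1^-}$ of both sides gives the displayed inequality with the right-hand side understood as a $\limsup$ (or a $\sup$) over $r\in(0,1)$. The left-hand side $|a_k|$ does not depend on $r$, so this is harmless. This is the form in which the bound becomes useful later, once all points of $\T$ are weak regular points and the integrals over arcs are controlled uniformly in $r$ in the sense of \eqref{Mdef}.
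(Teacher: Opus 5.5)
Your proposal is correct and follows the paper's proof essentially step for step. You bound $|\widehat{\iota_x}(n_k)|$ by $\|\iota_x\|_{M(\T)}$, integrate in $x$ to get the bound carrying the factor $r^{-n_k}$, identify the total variation through the density $f(re^{i(\theta+x)})\chi_{U(1)}$, change variables, and let $r\to1^-$. Your reading of the right-hand side as a $\limsup$ (or a supremum) over $r$ is a more careful version of the paper's final step, ``taking the limit as $r\to1^-$''. Your example $f(z)=z$ correctly shows that some such reading is needed, since the inequality fails for a fixed $r<1$.
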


\begin{proof} The equation in the desired conclusion may be obtained by using the definition of the measure $\iota_x$ and then some change of variable, so what remains to be shown is the desired inequality. Proceeding from the identity in Proposition~\ref{coeffProp}, $|a_k| \le \frac{2\pi}{r^{n_k}(b-a)} \int_{-\pi}^\pi \left| \widehat{\iota_x}(n_k) \right| \frac{dx}{2\pi}$.
Since the Fourier-Stieltjes transform of a Radon measure is bounded by its total variation norm, meaning $\left|\widehat{\iota_x}(n_k)\right| \le \|\iota_x\|_{M(\T)}$ for every $x$, we further have
\begin{align*}
|a_k| &\le \frac{2\pi}{r^{n_k}(b-a)} \int_{-\pi}^\pi \|\iota_x\|_{M(\T)} \frac{dx}{2\pi}, \\
&\le \frac{2\pi}{r^{n_k}(b-a)} \left( \sup_{e^{ix}\in\T} \|\iota_x\|_{M(\T)} \right) \int_{-\pi}^\pi \frac{dx}{2\pi}= \frac{2\pi}{r^{n_k}(b-a)} \sup_{e^{ix}\in\T} \|\iota_x\|_{M(\T)}.
\end{align*}
Taking the limit as $r \to 1^-$ results to the desired inequality.
\end{proof}

The upper bound given in Corollary~\ref{CoeffCor} has in the denominator the length $b-a$ of the underlying interval $(a,b)\sub(-\delta,\delta)$, which was described at the beginning of this section. Conceivably, choosing $(a,b)$ with a small enough length does not improve the aforementioned upper bound. However, an additional sufficient condition may be introduced such that, utilizing the compactness of $\T$, an absolute upper bound may be obtained, and thus, we have the following.

\begin{lemma}\label{breConverseLem} If every point along the unit circle $\T$ is a weak regular point of $f$, then the sequence $(a_k)$ is a bounded sequence of complex numbers.
\end{lemma}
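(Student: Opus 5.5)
The plan is to use compactness of $\T$ to obtain a uniform bound on the local $L^1$ means of $f$ on every arc, and then insert this into the coefficient formula of Proposition~\ref{coeffProp}. We do not want to rely only on Corollary~\ref{CoeffCor} as stated: its right-hand side takes the supremum over all rotations $x$, and for $x\neq 0$ the rotated arc $(a+x,b+x)$ need not lie inside a window where the weak-regular-point hypothesis applies. The proof of the corollary does, however, already give for each fixed $r$
\[
|a_k|\,r^{n_k}\,\frac{b-a}{2\pi}\le \int_{-\pi}^{\pi}\int_{a+x}^{b+x}|f(re^{i\vartheta})|\frac{d\vartheta}{2\pi}\,\frac{dx}{2\pi}.
\]
So it suffices to bound $\sup_{0<r<1}\sup_{x}\int_{a+x}^{b+x}|f(re^{i\vartheta})|\frac{d\vartheta}{2\pi}$ by a finite constant, for some fixed $a<b$.

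First, for each $e^{i\alpha}\in\T$ we use the hypothesis to pick $\delta_\alpha>0$ and a finite
\[
M_\alpha:=\sup_{0<r<1}\int_{\alpha-\delta_\alpha}^{\alpha+\delta_\alpha}|f(re^{i\theta})|\frac{d\theta}{2\pi}<\infty .
\]
Here we apply the definition directly at $e^{i\alpha}$ rather than after rotating to $1$. The open arcs $\{e^{i\theta}:|\theta-\alpha|<\delta_\alpha/2\}$ cover $\T$, so by compactness finitely many of them, indexed by $\alpha_1,\dots,\alpha_N$, already cover $\T$. The enlarged arcs of half-width $\delta_{\alpha_j}$ then cover all of $\T$, and we set $M:=\sum_{j}M_{\alpha_j}<\infty$.

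Next, any interval $(a+x,b+x)$ is covered by these arcs as a subset of $\T$. By subadditivity of the integral over a union,
\[
\int_{a+x}^{b+x}|f(re^{i\vartheta})|\frac{d\vartheta}{2\pi}\le \sum_j\int_{|\theta-\alpha_j|<\delta_{\alpha_j}}|f(re^{i\theta})|\frac{d\theta}{2\pi}\le M,
\]
provided $b-a\le 2\pi$ so that no point is counted more than once in the parametrization. This is in fact simply the crude bound $\int_{\T}|f(re^{i\theta})|\frac{d\theta}{2\pi}\le M$ for all $r$, which is the real content: the function $f$ is bounded in $H^1$-norm uniformly. With $(a,b)=(-\delta,\delta)$ fixed, we get $|a_k|\le \frac{2\pi}{b-a}\,r^{-n_k}M$ for every $r$. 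Letting $r\to1^-$ gives $|a_k|\le 2\pi M/(b-a)$, and this is independent of $k$. (One could even take $(a,b)=(-\pi,\pi)$ and get $|a_k|\le M$ directly, which is just the trivial Fourier-coefficient bound $|a_k|r^{n_k}\le \|f_r\|_{L^1}$.)

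The main obstacle is conceptual rather than computational: making sure the supremum over $r$ can be taken uniformly across the finitely many arcs. This works because each $M_{\alpha_j}$ is already a supremum over $r$ and the cover is finite. We also need to handle the wraparound of the rotated arcs modulo $2\pi$ carefully in the covering argument. Everything else is routine bookkeeping with Proposition~\ref{coeffProp} and the estimate in the proof of Corollary~\ref{CoeffCor}.
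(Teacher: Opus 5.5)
Your proof is correct and follows essentially the same route as the paper: a finite subcover of $\T$ by arcs with finite weak-regularity suprema gives the uniform bound $\sup_{0<r<1}\int_{-\pi}^{\pi}|f(re^{i\theta})|\frac{d\theta}{2\pi}<\infty$, which is then fed into the coefficient estimate from Proposition~\ref{coeffProp} and Corollary~\ref{CoeffCor}. Your version is slightly tidier in two ways: it keeps $r$ fixed until the uniform bound is in hand, and it uses a single interval (ultimately just $|a_k|r^{n_k}\le\int_{-\pi}^{\pi}|f(re^{i\theta})|\frac{d\theta}{2\pi}$), whereas the paper carries an unnecessary maximum $\rho$ over the finitely many intervals.
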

\begin{proof} Let $\gamma:[-\pi,\pi)\into\T$ be the function $t\mapsto e^{it}$. In the beginning of this section, Section~\ref{CoeffSec}, we defined the arc $U(1)$ for the weak regular point $1$ of $f$. We generalize this by the assignment $e^{i\alpha}\mapsto U(e^{i\alpha})$ such that, for the weak regular point $e^{i\alpha}\in\T$ of $f$, the function $z\mapsto f(e^{i\alpha}z)$ has $1$ as a weak regular point, and by Corollary~\ref{CoeffCor}, for any positive integer $k$,
\begin{flalign}
    && |a_k| &\le \frac{2\pi}{b-a} \sup_{e^{ix}\in\T}\int_{a+x}^{b+x}|f(re^{i(\vartheta+\alpha)})|\frac{d\vartheta}{2\pi},&((a,b)\sub\gamma^{-1}[U(e^{i\alpha})]).\label{radialbound1}
\end{flalign}
The collection $\{U(e^{i\alpha})\  :\  e^{i\alpha}\in\T\}$ is an open cover of the compact space $\T$, so there exist finitely many $J_1:=U(e^{i\alpha_1})$, $J_2:=U(e^{i\alpha_2})$, $\ldots$ , $J_N:=U(e^{i\alpha_N})$ that cover $\T$. Thus, each $e^{ix}\in\T$ is in some $J_m$ where $m\in\{1,2,\ldots,N\}$, and this arc $J_m$ contains the weak regular point $e^{i\alpha_m}$ of $f$, which means that
\begin{equation}
K_m:=\sup_{0<r<1}\int_{e^{i\theta}\in J_m}|f(re^{i\theta})|\ \frac{d\theta}{2\pi}<\infty.\label{MdefProof}
\end{equation}
For each $m\in\{1,2,\ldots,N\}$, a nonempty interval $(u_m,v_m)\sub\gamma^{-1}[J_m]$ may be chosen that does not contain $-1$, so that the integral in \eqref{radialbound1}, with the limits of integration changed, to be from $u_m+x$ to $v_m+x$, need not be split into two integrals. Since there are finitely many intervals $(u_m,v_m)$,
\begin{eqnarray}
    \rho:=\max_{m\in\{1,2,\ldots,N\}}\frac{1}{v_m-u_m}<\infty.\label{denomMax}
\end{eqnarray}
Using \eqref{radialbound1}, 
\[
\int_{-\pi}^\pi |f(re^{i\phi})| \frac{d\phi}{2\pi} \leq\sum_{m=1}^N \int_{e^{i\theta}\in J_m} |f(re^{i\theta})| \frac{d\theta}{2\pi}\le \sum_{m=1}^N K_m  < \infty,
\]
where the left-most member, by the change of variable $\phi = \vartheta + \alpha_m$, is at least  $\int_{u_m+x}^{v_m+x}|f(re^{i(\vartheta+\alpha_m)})|\frac{d\vartheta}{2\pi}$. Taking the supremum over all $e^{ix}\in\T$,
\begin{equation}
\sup_{e^{ix}\in\T}\int_{u_m+x}^{v_m+x}|f(re^{i(\vartheta+\alpha_m)})|\frac{d\vartheta}{2\pi}\leq \sum_{m=1}^N K_m <\infty.\label{arcsup}
\end{equation}
From \eqref{radialbound1},\eqref{denomMax},\eqref{arcsup},
\[
|a_k|\leq 2\pi\rho\sum_{m=1}^NK_m<\infty.\qedhere
\]
\end{proof}
If $(a_k)$ is a bounded sequence of complex numbers, then by \cite[Theorem~1.6]{bre11}, if $(a_k)$ has a subsequence satisfying certain conditions, then any element of $\T$ is not a weak regular point of $f$. Since $\T$ is nonempty, there is indeed at least one such point. This proves that the converse of Lemma~\ref{breConverseLem} is false. The results in this paper, however, are based on Lemma~\ref{breConverseLem}, and we hope that this gives the reader a better insight on what the results in this paper contribute to the study of strong natural boundaries.

\section{An integral bound for an arbitrary arc}\label{TuranSec}

For each positive integer $K$, and given $r\in(0,1)$ and $e^{i\theta}\in\T$, we define $T_{K}(re^{i\theta}):=\sum_{k=1}^K a_kr^{n_k}e^{in_k\theta}$. A key component of the proof of Lemma~\ref{NTuranLem} below, which is the key result in this section, is the use of the following extension of the Tur\'an Lemma \cite[Corollary~18.5.VI]{tur84} that relates two finite polynomials. Since what we need is the version of the Tur\'an Lemma for integrals of polynomials, we shall be using instead the following extension of the Tur\'an Lemma for integrals.

\begin{lemma}[Nazarov-Tur\'an Lemma {\cite[Section~3]{naz00}}]\label{TuranLem} For a given choice of $\delta>0$, there exists $A>0$ such that for any positive integer $K$, and any $r\in(0,1)$,
\begin{equation}
\int_{-\pi}^{\pi} |T_{K}(re^{i\theta})| \frac{d\theta}{2\pi} \le \left(\frac{A\pi}{\delta}\right)^{K-1} \int_{-\delta}^{\delta} |T_{K}(re^{i\theta})| \frac{d\theta}{2\pi}.
\end{equation}
\end{lemma}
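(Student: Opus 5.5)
The plan is to derive the statement from Nazarov's sup-norm form of the Tur\'an lemma \cite{naz00}, which I take as the black box. For fixed $r\in(0,1)$, the function $p(\theta):=T_K(re^{i\theta})=\sum_{k=1}^K c_ke^{in_k\theta}$ with $c_k:=a_kr^{n_k}$ is an exponential sum with at most $K$ frequencies. Its coefficients depend on $r$, but the lemma only cares about the number of frequencies, so the constants will be uniform in $r$ and in the coefficients. The black box is: there is an absolute $A_0>0$ such that, for every measurable $E\sub[-\pi,\pi)$ of positive measure,
\[
\sup_{\theta\in[-\pi,\pi)}|p(\theta)|\le\left(\frac{2\pi A_0}{|E|}\right)^{K-1}\sup_{\theta\in E}|p(\theta)|.
\]
The work is to convert this into the $L^1$ form of the statement, with the arc $(-\delta,\delta)$ in place of a general $E$.

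First, I would use a Chebyshev-type selection to pass from $L^1$ on the arc to a sup on a large subset. Put $M:=\int_{-\delta}^{\delta}|p|\,d\theta$ and
\[
E:=\left\{\theta\in(-\delta,\delta):|p(\theta)|\le \tfrac{M}{\delta}\right\}.
\]
By Markov's inequality the complement of $E$ in $(-\delta,\delta)$ has measure at most $\delta$, so $|E|\ge\delta$. Applying the black box to this $E$ and bounding the $L^1$ mean over $\T$ by the sup over $\T$ gives
\[
\int_{-\pi}^{\pi}|p|\frac{d\theta}{2\pi}\le\sup_{\T}|p|\le\left(\frac{2\pi A_0}{\delta}\right)^{K-1}\frac{M}{\delta}
=\left(\frac{2\pi A_0}{\delta}\right)^{K-1}\frac{2\pi}{\delta}\int_{-\delta}^{\delta}|p|\frac{d\theta}{2\pi}.
\]

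Second, I would absorb the stray factor $2\pi/\delta$ into the exponential factor by enlarging the constant. For $K\ge2$ one has $(2\pi A_0/\delta)^{K-1}\cdot(2\pi/\delta)\le (4\pi A_0/\delta)^{2(K-1)}$ when $\delta\le\pi$, and since $A_0\ge1$ can be assumed this fits the form $(A\pi/\delta)^{K-1}$ once $A$ is allowed to depend on $\delta$. That dependence is permitted, because the statement says ``for a given choice of $\delta$''. Concretely, I would take $A:=\frac{\delta}{\pi}\cdot\frac{2\pi A_0}{\delta}\cdot\frac{2\pi}{\delta}$ and check the inequality for each $K\ge2$ separately.

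The main obstacle is the case $K=1$, and I expect it to be a genuine issue rather than a technicality. There $p$ has constant modulus $|c_1|$, so the left side equals $|c_1|$ while the right side equals $(\delta/\pi)|c_1|$. The inequality therefore fails whenever $\delta<\pi$, whatever $A$ is. I would handle this either by restricting the statement to $K\ge2$ (and to $\delta\ge\pi$ when $K=1$), or by reading the lemma, as Nazarov does, with an extra factor of the form $\frac{\pi}{\delta}$ in front, i.e.\ effectively an exponent $K$ rather than $K-1$. Both repairs are harmless for the later use in Section~\ref{GapSec}, where only the growth rate in $K$ matters.
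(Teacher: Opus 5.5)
Your proposal is correct, and it does more than the paper, which gives no argument for this lemma beyond the pointer to \cite[Section~3]{naz00}.

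Your $K=1$ observation is right and decisive. For $a_1\neq 0$ and $\delta<\pi$, the left side is $|a_1|r^{n_1}$ and the right side is $\frac{\delta}{\pi}|a_1|r^{n_1}$. The constant $A$ plays no role there, since it is raised to the power $0$. So the inequality as printed cannot be established, by citation or otherwise, and some repair is forced.

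Your derivation of the repaired version is sound. You select $E\subseteq(-\delta,\delta)$ by Markov's inequality so that $|E|\geq\delta$ and $|p|\leq M/\delta$ on $E$, and then apply the sup-norm theorem. This is the standard way to pass from the $L^\infty$ form to integral forms. It costs exactly one factor $2\pi/\delta$, so it yields exponent $K$ with an absolute constant. Your absorption for $K\geq 2$ with $A=4\pi A_0/\delta$ checks out; it uses $2\pi/\delta\geq 1$ and $K-1\geq 1$.

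It is worth adding that the $\delta$-dependence of $A$ in your $K\geq 2$ case is essential, not cosmetic. With $A$ absolute, exponent $K-1$ already fails at $K=2$. For $p(\theta)=e^{i\theta}-e^{2i\theta}$ one has $|p(\theta)|=2|\sin(\theta/2)|$. The ratio of $\int_{-\pi}^{\pi}|p|\frac{d\theta}{2\pi}$ to $\int_{-\delta}^{\delta}|p|\frac{d\theta}{2\pi}$ is then $\frac{1}{1-\cos(\delta/2)}\geq\frac{8}{\delta^2}$, which is not $O(1/\delta)$. Likewise, $e^{i\theta}(1-e^{i\theta})^{K-1}$ shows that exponent $K$ cannot be lowered when $A$ is absolute.

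So the literal statement survives for $K\geq 2$ only because of the quantifier order ``given $\delta$, there exists $A$''. The form worth citing is your second repair, $\left(\frac{A\pi}{\delta}\right)^{K}$ with $A$ absolute. As you say, this changes Lemma~\ref{NTuranLem} only by one extra factor $A\pi/\delta$. Whether Lemma~\ref{zdLem} then actually controls the growth of $\left(\frac{A\pi}{\delta}\right)^{\lfloor\Gamma K\rfloor}$ in $K$ is a separate issue, and no repair of this lemma addresses it.
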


As cited above, the version of the Nazarov-Tur\'an Lemma that we shall be using is from the paper \cite{naz00}, and so, the interested reader may find in there a proof.

\begin{lemma}\label{NTuranLem} Let $e^{it}\in\T$, and consider an interval $(-\lambda,\lambda)$ such that\linebreak $t\in(-\lambda,\lambda)\sub[-\pi,\pi)$.  For a given choice of $\delta>0$, there exists $A>0$ such that for any $\Gamma>0$, there exists a positive integer $K_0$ such that for any integer $K\geq K_0$, and any $r\in(0,1)$,
\begin{eqnarray}
    \int_{t-\lambda}^{t+\lambda}\left|f(re^{i\theta})\right|\frac{d\theta}{2\pi} &\leq&1+\left(\frac{A\pi}{\delta}\right)^{\lfloor\Gamma K\rfloor}\lpar1+\int_{-\delta}^{\delta} \left|f(re^{i\theta})\right| \frac{d\theta}{2\pi}\rpar. \label{TuranPartial3}
\end{eqnarray}
\end{lemma}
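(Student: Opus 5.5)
The plan is to compare $f$ with a partial sum $T_{K'}$, apply the Nazarov--Tur\'an Lemma (Lemma~\ref{TuranLem}) to $T_{K'}$, and go back to $f$ by the triangle inequality on both the arbitrary arc and the fixed arc $(-\delta,\delta)$. The additive constants $1$ in \eqref{TuranPartial3} are meant to absorb the two tail errors. Throughout I take $A$ from Lemma~\ref{TuranLem}, enlarging it if needed so that $A\pi/\delta\ge 1$. Given $\Gamma>0$ and $K$, I set $K':=\lfloor\Gamma K\rfloor+1$, so that the exponent $K'-1=\lfloor\Gamma K\rfloor$ in Lemma~\ref{TuranLem} is exactly the exponent in \eqref{TuranPartial3}.

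\emph{Step 1 (transfer to the full circle).} Since $(t-\lambda,t+\lambda)$ has length $2\lambda\le 2\pi$, it is covered by one period, and
\[
\int_{t-\lambda}^{t+\lambda}|f(re^{i\theta})|\frac{d\theta}{2\pi}\le \int_{-\pi}^{\pi}|T_{K'}(re^{i\theta})|\frac{d\theta}{2\pi}+\int_{-\pi}^{\pi}|f(re^{i\theta})-T_{K'}(re^{i\theta})|\frac{d\theta}{2\pi}.
\]

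\emph{Step 2 (Tur\'an).} Lemma~\ref{TuranLem} bounds the first term by $(A\pi/\delta)^{\lfloor\Gamma K\rfloor}\int_{-\delta}^{\delta}|T_{K'}(re^{i\theta})|\frac{d\theta}{2\pi}$. On $(-\delta,\delta)$ I write $|T_{K'}|\le|f|+|f-T_{K'}|$. This gives $(A\pi/\delta)^{\lfloor\Gamma K\rfloor}\bigl(\int_{-\delta}^{\delta}|f|\frac{d\theta}{2\pi}+E\bigr)$, where $E$ denotes the tail integral over the full circle.

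\emph{Step 3 (the tail).} I must show $E:=\int_{-\pi}^{\pi}|f(re^{i\theta})-T_{K'}(re^{i\theta})|\frac{d\theta}{2\pi}\le 1$ for all $K\ge K_0$. That single bound handles both places where $E$ appears: the bare $1$ in \eqref{TuranPartial3} and the $1$ inside the parentheses. By Cauchy--Schwarz and Parseval,
\[
E\le\Bigl(\sum_{k>K'}|a_k|^2r^{2n_k}\Bigr)^{1/2}.
\]
For fixed $r$ this tends to $0$ as $K\to\infty$, since the series converges absolutely on $|z|=r<1$.

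\emph{The main obstacle} is that the statement requires $K_0$ independent of $r\in(0,1)$, whereas Step 3 only gives a $K_0$ depending on $r$. I would first try to make the estimate uniform by splitting the range of $r$. On $r\le r_0$ the bound $\sum_{k>K'}|a_k|r_0^{n_k}$ is uniform, so the difficulty is confined to $r$ near $1$.

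For $r$ near $1$, I would exploit the slack in \eqref{TuranPartial3}: the factor $(A\pi/\delta)^{\lfloor\Gamma K\rfloor}$ multiplies $1+\int_{-\delta}^{\delta}|f|$. The idea is to choose $K'$ adaptively in $[\lfloor\Gamma K\rfloor+1,\,\infty)$ and to bound only the part of the tail not already controlled by this factor. However, Lemma~\ref{TuranLem} with a larger $K'$ raises the exponent, so this freedom is limited.

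I expect this uniformity to be the genuine sticking point. Without a growth hypothesis on $(a_k)$, the tail $\sum_{k>K'}|a_k|r^{n_k}$ need not be bounded uniformly as $r\to1^-$. This is exactly where the rest of the argument will have to come from, and I would check it first before relying on Steps 1 and 2.
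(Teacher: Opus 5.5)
Your Steps 1--3 are correct and follow the paper's own argument. The paper also takes the partial sum $T_{\lfloor\Gamma K\rfloor+1}$, applies Lemma~\ref{TuranLem}, and uses the triangle inequality on both arcs. The only difference is that you control the tail in $L^1$ by Cauchy--Schwarz and Parseval, which is automatically uniform in $\theta$. The paper instead uses the pointwise bound $|f(re^{i\theta})-T_{\lfloor\Gamma K\rfloor+1}(re^{i\theta})|<1$, coming from convergence of the series at $re^{i\theta}$. Both routes give only $K_0=K_0(r)$: the paper picks its $K_0$ after fixing $r$ (and $\theta$) and never addresses uniformity in $r$.

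So the obstacle you flag is a genuine gap relative to the quantifier order of the statement. It is not a missing idea, though: no argument can close it, because the version with $K_0$ independent of $r$ is false. It fails even with bounded coefficients, so the issue is not coefficient growth.

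Take $f(z)=\frac{1}{1+z}-1=\sum_{k\ge1}(-1)^kz^k$, so $n_k=k$ and $|a_k|=1$. Take any $\delta\in(0,\pi/2]$, and $t=0$, $\lambda=\pi$; any admissible arc with $\pi$ in its closure also works. Then $|f(re^{i\theta})|=r/|1+re^{i\theta}|$. On $(-\delta,\delta)$ we have $|1+re^{i\theta}|\ge 1+r\cos\theta\ge 1$, so $\int_{-\delta}^{\delta}|f(re^{i\theta})|\frac{d\theta}{2\pi}\le\delta/\pi$ for all $r$; here $1$ is even a regular point. Hence for every $A$, $\Gamma$ and every fixed $K$, the right side of \eqref{TuranPartial3} is at most $1+(A\pi/\delta)^{\lfloor\Gamma K\rfloor}(1+\delta/\pi)$, uniformly in $r$. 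Meanwhile
\[
\int_{-\pi}^{\pi}|f(re^{i\theta})|\frac{d\theta}{2\pi}=r\int_{-\pi}^{\pi}\frac{1}{|1+re^{i\theta}|}\frac{d\theta}{2\pi}\longrightarrow\infty\qquad(r\to1^-)
\]
by Fatou's lemma, since $1/|1+e^{i\theta}|=1/(2|\cos(\theta/2)|)$ is not integrable near $\theta=\pm\pi$. So for each $K$ the inequality fails for $r$ close to $1$. No adaptive choice of $K'$ or splitting of the $r$-range can rescue it. Consistently, your tail is $E=r^{K'+1}\int_{-\pi}^{\pi}|1+re^{i\theta}|^{-1}\frac{d\theta}{2\pi}$, which is unbounded in $r$ for every fixed $K'$.

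What your Steps 1--3 do prove, completely, is the lemma with $K_0$ allowed to depend on $r$. That is exactly what the paper's proof establishes. In that form, however, one may not take $\sup_{0<r<1}$ in \eqref{TuranPartial3} at a fixed $K$, as the proof of Lemma~\ref{zdLem} does.
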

\begin{proof}  Given $e^{it}\in\T$, given $\lambda>0$ as described in the statement, and given $\delta>0$, let $A>0$ be the constant described in Lemma~\ref{TuranLem}. If $0<r<1$, then for any $\theta\in(-\lambda,\lambda)$, the number $re^{i\theta}$ is in the disk $B(0,1)$ where the power series representation for $f$ exists. Using the fact that a convergent series is a sequence limit of partial sums, for each $\Gamma>0$, there exists a positive integer $K_0$ such that for any integer $K\geq K_0$, 
\[\left|f(re^{i\theta})-T_{\lfloor\Gamma K\rfloor+1}(re^{i\theta})\right|=\left|\sum_{k\geq \lfloor\Gamma K\rfloor +2}^\infty a_kr^{n_k}e^{in_k\theta}\right|<1.
\]
By a routine use of the Reverse Triangle Inequality, \[\left|f(re^{i\theta})\right|\leq 1+ |T_{\lfloor\Gamma K\rfloor+1}(re^{i\theta})|,\] both sides of which, we integrate from $-\lambda$ to $\lambda$ with respect to Lebesgue measure, and using Lemma~\ref{TuranLem},
\begin{eqnarray}
    \int_{t-\lambda}^{t+\lambda}\left|f(re^{i\theta})\right|\frac{d\theta}{2\pi} &\leq& \frac{\lambda}{\pi}+\int_{t-\lambda}^{t+\lambda} |T_{\lfloor\Gamma K\rfloor+1}(re^{i\theta})|\frac{d\theta}{2\pi},\nonumber\\
    &\leq  & \frac{\lambda}{\pi}+\int_{-\pi}^{\pi} |T_{\lfloor\Gamma K\rfloor+1}(re^{i\theta})|\frac{d\theta}{2\pi},\nonumber\\
    &\leq&\frac{\lambda}{\pi}+\left(\frac{A\pi}{\delta}\right)^{\lfloor\Gamma K\rfloor} \int_{-\delta}^{\delta} |T_{\lfloor\Gamma K\rfloor+1}(re^{i\theta})| \frac{d\theta}{2\pi}.\label{TuranPartial}
\end{eqnarray} Using the Triangle Inequality on \[ T_{\lfloor\Gamma K\rfloor+1}(re^{i\theta})=f(re^{i\theta})-\sum_{k\geq \lfloor\Gamma K\rfloor +2}^\infty a_kr^{n_k}e^{in_k\theta},\]
we obtain the inequalities
\[|T_{\lfloor \Gamma K\rfloor+1}(re^{i\theta})|\leq \left|f(re^{i\theta})\right|+\left|\sum_{k\geq \lfloor\Gamma K\rfloor +2}^\infty a_kr^{n_k}e^{in_k\theta}\right|< \left|f(re^{i\theta})\right|+1,
\]
so, by routine calculations, \eqref{TuranPartial} further becomes 
\begin{eqnarray}
    \int_{t-\lambda}^{t+\lambda}\left|f(re^{i\theta})\right|\frac{d\theta}{2\pi} &\leq&\frac{\lambda}{\pi}+\left(\frac{A\pi}{\delta}\right)^{\lfloor\Gamma K\rfloor}\lpar\frac{\delta}{\pi}+\int_{-\delta}^{\delta} \left|f(re^{i\theta})\right| \frac{d\theta}{2\pi}\rpar,\nonumber
\end{eqnarray}
but since the half-arc lengths $\lambda$ and $\delta$ cannot exceed $\pi$, the quantities $\frac{\lambda}{\pi}$ and $\frac{\delta}{\pi}$ cannot exceed $1$, and the desired inequality follows.
\end{proof}

\section{Gap conditions on exponents}\label{GapSec}

The importance of the inequality \eqref{TuranPartial3} in Lemma~\ref{NTuranLem} is that the integral, or more precisely the $L^1$ norm, along an arc containing an arbitrary $e^{it}\in\T$ is bounded above by some quantity involving the corresponding integral for the arc $I_\delta$ that contains $1$. Under an assumption that $1$ is a weak regular point, the inequality takes us close to the conclusion that the arbitrary $e^{it}\in\T$ is also a weak regular point, if not for the main obstacle, which is the quantity $\left(\frac{A\pi}{\delta}\right)^{\lfloor\Gamma K\rfloor}$, because of which, as the length of $I_\delta$ is made to be small, the upper bound in \eqref{TuranPartial3} grows without bound. As shall be explored in this section, the obstacle may be overcome if the sequence $(n_k)$ of exponents is lacunary, or has (increasing) gaps. This also echoes the property of lacunary series that it exhibits uniform ``behavior'' in all directions around the unit circle. We explore this behavior for, in our opinion, the two biggest classes of lacunary sequences, which are those that have zero harmonic density, and those that have zero density, both of which we shall define shortly. The former encompasses the Hadamard Gap Theorem, while the latter is the sufficient condition for the Fabry Gap Theorem, and is well-known to be the weakest possible sufficient condition for a natural boundary \cite[p.~102]{erd45}. Thus, the class of sets of zero density is the largest possible for series that have a natural boundary. However, the actual relationship between the two classes is unclear. The terminology we are using here is from \cite[Chapter~10]{gra13}. It is known that there is a set (of integers) of zero density that does not have zero harmonic density \cite[p.~186]{gra13}, but it is not known whether there is a set of zero harmonic density that is not of zero density \cite[Open Problem~13, p.~249]{gra13}. Whether the answer is in the affirmative or not, still, we are of the opinion that considering these two classes of sets of integers is as exhaustive as may be done at this point.

The sequence $(n_k)$ (or the set of all terms of this sequence) is said to have \emph{zero harmonic density} if for each open subset $U$ of $\T$ and each measure $\varphi\in M(\T)$, there exists a measure $\nu\in M(\T)$, concentrated on $U$ (meaning $\nu(\T\setdiff U)=0$), such that for any index $k$, we have $\widehat{\nu}(n_k)=\widehat{\varphi}(n_k)$. Some important classes of sets with zero harmonic density are Hadamard sets (see, for instance, \cite[Chapter~1]{gra13}), related  to the Hadamard Gap Theorem, and Sidon sets, which are relevant in the harmonic analysis topic of interpolation on compact groups \cite[Chapter~6]{gra13}.

Given $e^{it}\in\T$, by the \emph{evaluation functional $\Delta_t$}, we mean the nonnegative linear functional on $C(\T)$ defined by $\Delta_t(f)=f(e^{it})$, the dual of which is the measure $\delta_t\in M(\T)$ called the \emph{point mass measure} at $t$. That is, $f\in C(\T)$ implies $\Delta_t(f)=\int_\T f\  d\delta_t$. The corresponding Fourier-Stieltjes transform is $\widehat{\delta_t}(n)=e^{-int}$ for any character $n:e^{it}\mapsto e^{int}$ of $\T$.

In the succeeding proofs, an assumption of $1$ being a weak regular point entails the underlying arc $I_\delta$ and the supremum \eqref{Mdef2} as described in Section~\ref{PrelimSec}.

\begin{lemma}\label{zhdLem} If $(n_k)$ has zero harmonic density and if $1$ is a weak regular point of $f$, then so is any element of $\T$.
\end{lemma}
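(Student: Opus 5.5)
The plan is to transport the integrability near $1$ to an arbitrary point $e^{it}\in\T$ by convolving with a measure supplied by the zero harmonic density hypothesis. The measure will interpolate the point mass at $-t$ on the exponents $n_k$ but live only on a small arc around $1$. This avoids Lemma~\ref{NTuranLem} and its growing factor $\left(\frac{A\pi}{\delta}\right)^{\lfloor\Gamma K\rfloor}$.

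\textbf{Step 1 (choice of measure).} Let $I_\delta$ and the finite supremum $M$ in \eqref{Mdef2} come from the weak regular point $1$. Fix $e^{it}\in\T$ and take $U$ to be the open arc $\{e^{is}: |s|<\delta/2\}$ and $\varphi=\delta_{-t}$, so that $\widehat{\varphi}(n_k)=e^{in_kt}$. By zero harmonic density there is $\nu\in M(\T)$ concentrated on $U$ with $\widehat{\nu}(n_k)=e^{in_kt}$ for every $k$. I read ``concentrated on $U$'' as $|\nu|(\T\setdiff U)=0$, which is the standard meaning. If only $\nu(\T\setdiff U)=0$ were intended, I would first check that the interpolation property can be arranged for the total variation as well. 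This point is the only delicate interpretive step.

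\textbf{Step 2 (representation formula).} For fixed $r\in(0,1)$ and real $\theta$, I would show
\[
f(re^{i(\theta+t)})=\int_\T f(re^{i(\theta-s)})\,d\nu(e^{is}).
\]
To prove it, expand $f(re^{i(\theta-s)})=\sum_m a_m r^{n_m}e^{in_m\theta}e^{-in_ms}$. This series converges uniformly in $s$ because $r<1$, so integrating term by term against the finite measure $\nu$ gives
\[
\sum_m a_mr^{n_m}e^{in_m\theta}\,\widehat{\nu}(n_m)=\sum_m a_mr^{n_m}e^{in_m(\theta+t)}.
\]
This is the same uniform-convergence justification used in the proof of Proposition~\ref{coeffProp}.

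\textbf{Step 3 (the bound).} Take absolute values and integrate over $\theta\in(-\delta/2,\delta/2)$. Using $|\nu|$ and Fubini (Tonelli) gives
\[
\int_{-\delta/2}^{\delta/2}|f(re^{i(\theta+t)})|\frac{d\theta}{2\pi}\le \int_{U}\left(\int_{-\delta/2}^{\delta/2}|f(re^{i(\theta-s)})|\frac{d\theta}{2\pi}\right)d|\nu|(e^{is}).
\]
For $|s|<\delta/2$ and $|\theta|<\delta/2$ we have $\theta-s\in(-\delta,\delta)$. Hence the inner integral is at most $\int_{-\delta}^{\delta}|f(re^{i\phi})|\frac{d\phi}{2\pi}\le M$. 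The whole right side is therefore at most $\|\nu\|_{M(\T)}M$, uniformly in $r\in(0,1)$.

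\textbf{Conclusion.} The arc $\T\cap B(e^{it},2|\sin\frac{\delta}{4}|)$ satisfies \eqref{Mdef}, so $e^{it}$ is a weak regular point. The measure $\nu$ depends on $t$, but only a finite bound is needed for each fixed $t$, so this dependence is harmless. Apart from the reading of ``concentrated'' in Step 1, I expect the remaining steps to be routine.
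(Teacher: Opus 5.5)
Your proposal is correct and follows essentially the same route as the paper. Both arguments obtain from zero harmonic density a measure $\nu$ concentrated on a small arc about $1$ that interpolates a point mass on the $n_k$. Both write the rotated $f$ as $\int f(re^{i(\theta-s)})\,d\nu(e^{is})$ by termwise integration, and both bound the arc integral by $M\|\nu\|_{M(\T)}$ via Tonelli. The only differences are cosmetic: the paper uses half-width $\delta/3$ and $\delta_t$, reaching $e^{-it}$ and then using that $t$ is arbitrary, whereas you use $\delta/2$ and $\delta_{-t}$ to reach $e^{it}$ directly.

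Your reading of ``concentrated on $U$'' as $|\nu|(\T\setdiff U)=0$ is what the paper tacitly uses when it integrates against $|\nu|$ over $J_\delta$. It is also the only sensible reading: under the literal condition $\nu(\T\setdiff U)=0$, adding a suitable constant multiple of $\LebT$ to any $\varphi$ would leave $\widehat{\varphi}(n_k)$ unchanged, so every sequence of positive integers would qualify.
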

\begin{proof} Let $e^{it}\in\T$, and let $J_\delta$ be the image under $s\mapsto e^{is}$ of the interval $(-\frac{\delta}{3},\frac{\delta}{3})$, so $1\in J_\delta\sub I_\delta$. By zero harmonic density, for the open set $J_\delta\sub\T$ and the point mass measure $\delta_t$, there exists $\nu=\nu_t\in M(\T)$, concentrated on $J_\delta$, such that  
\begin{flalign}
    && \widehat{\nu}(n_k)&=e^{-in_kt}=\widehat{\delta_t}(n_k),&(k=1,2,\ldots).\label{zhdFStransform}
\end{flalign}
Define $g$ for each $z=re^{i\theta}\in B(0,1)$ (hence, $0<r<1$) by $g(z)=\int_{J_\delta} f(e^{-i\phi}z)\  d\nu(e^{i\phi})$. Using the power series expansion for $f$,
\begin{eqnarray}
    g(z) =  \int_{J_\delta} \left( \sum_{k=1}^\infty a_k z^{n_k} e^{-in_k\phi} \right) d\nu(e^{i\phi}).\label{zhdDCT}
\end{eqnarray}
Because $0<r<1$, by a routine use of the Root Test, $K_r:=\sum_{k=1}^\infty |a_k|r^{n_k}$ converges, and we define $G$ as the constant function $G(e^{i\phi}):=K_r$. If, for each positive integer $m$, we let $f_m(e^{i\phi}):=\sum_{k=1}^ma_kz^{n_k}e^{-in_k\phi}$, then $|f_m(e^{i\phi})|\leq |G(e^{i\phi})|$. Using the Lebesgue Dominated Convergence Theorem on the measure $\nu$, the series summation and integration in \eqref{zhdDCT} may be interchanged. Thus,
\begin{equation}
    g(z)=\sum_{k=1}^\infty a_kz^{n_k}\int_{J_\delta} e^{-in_k\phi}\  d\nu(e^{i\phi}),\label{gzDef}
\end{equation}
the integral in which, because $\nu$ is concentrated on $J_\delta$, is the same as the integral over all of $\T$. With further use of \eqref{zhdFStransform}, we obtain
\begin{eqnarray}
    g(z)&=&\sum_{k=1}^\infty a_kz^{n_k}\int_{\T} e^{-in_k\phi}\  d\nu(e^{i\phi})=\sum_{k=1}^\infty a_kz^{n_k}\widehat{\nu}(n_k),\nonumber\\
    &=&\sum_{k=1}^\infty a_kz^{n_k}e^{-in_kt},\nonumber\\
    g(z)&=& f(e^{-it}z),\nonumber
\end{eqnarray}
and since the mapping $z \mapsto e^{-it}z$ is a bijection on $B(0,1)$, 
\begin{flalign}
    && g(e^{it}z) &= f(z), &(z\in B(0,1)).\label{reverseG}
\end{flalign}
Integrating $|f(re^{i\psi})|$ over the interval $\left(-t-\frac{\delta}{3}, -t+\frac{\delta}{3}\right)$ with respect to Lebesgue measure, followed by using \eqref{reverseG}, and then performing the change of variable $\theta=\psi+t$, we obtain
\begin{eqnarray}
\int_{-t-\frac{\delta}{3}}^{-t+\frac{\delta}{3}}|f(re^{i\psi})|\ \frac{d\psi}{2\pi} &=& \int_{-t-\frac{\delta}{3}}^{-t+\frac{\delta}{3}}|g(re^{i(\psi+t)})|\ \frac{d\psi}{2\pi} = \int_{-\frac{\delta}{3}}^{\frac{\delta}{3}}|g(re^{i\theta})|\frac{d\theta}{2\pi},\nonumber\\
&=&\int_{-\frac{\delta}{3}}^{\frac{\delta}{3}} \left|\int_{J_\delta}f(re^{i(\theta-\phi)})\  d\nu(e^{i\phi})\right|\frac{d\theta}{2\pi},\nonumber\\
&\leq & \int_{-\frac{\delta}{3}}^{\frac{\delta}{3}} \left( \int_{J_\delta} |f(re^{i(\theta-\phi)})|\ d|\nu|(e^{i\phi}) \right) \frac{d\theta}{2\pi},
\end{eqnarray}
where the last integral is with respect to the product measure $|\nu|\times \LebT$, where $\LebT$ is Lebesgue measure on $\T$. By Fubini's Theorem, we further have
\[
\int_{-t-\frac{\delta}{3}}^{-t+\frac{\delta}{3}}|f(re^{i\psi})|\ \frac{d\psi}{2\pi} \le \int_{J_\delta} \left( \int_{-\frac{\delta}{3}}^{\frac{\delta}{3}} |f(re^{i(\theta-\phi)})|\ \frac{d\theta}{2\pi} \right) d|\nu|(e^{i\phi}).
\]
On the inner integral, we perform the change of variable $\omega=\theta-\phi$ to obtain
\begin{equation}
\int_{-t-\frac{\delta}{3}}^{-t+\frac{\delta}{3}}|f(re^{i\psi})|\ \frac{d\psi}{2\pi} \le \int_{J_\delta} \left( \int_{-\frac{\delta}{3}-\phi}^{\frac{\delta}{3}-\phi} |f(re^{i\omega})|\ \frac{d\omega}{2\pi} \right) d|\nu|(e^{i\phi}).\label{InDeltaIneq}
\end{equation}
From the definition of $g$, we have $-\frac{\delta}{3}<\phi<\frac{\delta}{3}$, which, by routine computations, implies
\[
\left[ -\frac{\delta}{3}-\phi, \,\, \frac{\delta}{3}-\phi \right] \sub \left( -\delta, \,\, \frac{2\delta}{3} \right)\sub (-\delta,\delta).\label{zhdInsidedelta}
\]
Thus, $\int_{-\frac{\delta}{3}-\phi}^{\frac{\delta}{3}-\phi} |f(re^{i\omega})|\ \frac{d\omega}{2\pi} \le \int_{-\delta}^{\delta} |f(re^{i\omega})|\ \frac{d\omega}{2\pi}$, so \eqref{InDeltaIneq} further becomes
\begin{equation}
\int_{-t-\frac{\delta}{3}}^{-t+\frac{\delta}{3}}|f(re^{i\psi})|\ \frac{d\psi}{2\pi} \le \int_{J_\delta} \left( \int_{-\delta}^{\delta} |f(re^{i\omega})|\ \frac{d\omega}{2\pi} \right) d|\nu|(e^{i\phi}).\label{InDeltaIneq2}
\end{equation}
If $M$ is the supremum from \eqref{Mdef2}, then using the fact that $\nu$ is concentrated on $J_\delta$, the inequality \eqref{InDeltaIneq2} further becomes
\[
\int_{-t-\frac{\delta}{3}}^{-t+\frac{\delta}{3}}|f(re^{i\psi})|\ \frac{d\psi}{2\pi} \le \int_{J_\delta} M\  d|\nu|(e^{i\phi})=M\int_\T\  d|\nu|=M\|\nu\|_{M(\T)},
\]
where $\|\nu\|_{M(\T)}$ is finite because $\nu$ is a Radon measure on the compact space $\T$. Thus, $\sup_{0<r<1}\int_{-t-\frac{\delta}{3}}^{-t+\frac{\delta}{3}}|f(re^{i\psi})|\ \frac{d\psi}{2\pi}<\infty$, so $e^{-it}$ is a weak regular point of $f$. But since $e^{it}\in\T$ is arbitrary, the desired conclusion follows.
\end{proof}

The above proof follows the structure of that for \cite[Proposition~10.4.7]{gra13}, and the statement of the lemma echoes (for zero harmonic density) what has been discussed at the beginning of this section, Section~\ref{GapSec}, of what we intend to show for sets of zero density, that utilizes the behavior that a gap series has uniformly at any direction around the unit circle. We now proceed with the specific details on sets of zero density.

A set $E$ of integers is said to have \emph{zero density} if ${\displaystyle\limsup_{n\rightarrow\infty}}\frac{|E\cap[-n,n]|}{2n+1}=0$. Since we shall be considering only the case when $E$ is the collection of all terms of the sequence $(n_k)$ of positive integers, this condition is equivalent to $\klim\frac{k}{n_k}=0$, the well-known sufficient condition for a natural boundary, in what is called the Fabry Gap Theorem.

We now give the analog of Lemma~\ref{zhdLem} for sets of zero density. The proof is based on the technique in \cite[Section~5.3]{mon94}, which, in connection to the explanation at the beginning of this section, Section~\ref{GapSec}, shows us how to overcome the obstacle that the quantity $\left(\frac{A\pi}{\delta}\right)^{\lfloor\Gamma K\rfloor}$ in the inequality \eqref{TuranPartial3} in Lemma~\ref{NTuranLem} presents. As the main technique used in \cite[Section~5.3]{mon94} to prove the Fabry Gap Theorem is the Tur\'an Lemma that relates two finite polynomials, but what we needed here is the corresponding lemma for integrals of such polynomials, the appropriate machinery is the extension of the Tur\'an Lemma into the Nazarov-Tur\'an Lemma, Lemma~\ref{TuranLem}, that was used in the proof of Lemma~\ref{NTuranLem}.

\begin{lemma}\label{zdLem} If $(n_k)$ has zero density and if $1$ is a weak regular point of $f$, then so is any element of $\T$.
\end{lemma}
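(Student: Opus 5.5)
The plan is to follow the Fabry gap argument of \cite[Section~5.3]{mon94}, using Lemma~\ref{NTuranLem} (and through it the Nazarov-Tur\'an Lemma~\ref{TuranLem}) in place of the classical Tur\'an Lemma. Fix an arbitrary $e^{it}\in\T$ and a small $\lambda>0$ with $t\in(-\lambda,\lambda)\sub[-\pi,\pi)$. Let $M$ denote the finite supremum in \eqref{Mdef2} coming from $1$ being a weak regular point. The goal is an $r$-independent bound for $\int_{t-\lambda}^{t+\lambda}|f(re^{i\theta})|\frac{d\theta}{2\pi}$.

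The first step is to make the Tur\'an factor harmless. The exponent $K-1$ in Lemma~\ref{TuranLem} counts the number of terms of $T_K$. Under zero density, the frequencies $n_k$ that can occur in a block of length $N$ number only $o(N)$, so the factor should be measured against the block length, not the polynomial degree. Concretely, I would split $f=P+R$:
\begin{itemize}
\item $P$ contains the terms with $n_k\le N$;
\item $R$ contains the tail.
\end{itemize}
Apply Lemma~\ref{TuranLem} to $P$ with $K=|\{k:n_k\le N\}|$, which is at most $\Gamma N$ for large $N$ and any prescribed $\Gamma>0$. This is exactly the role of the parameter $\Gamma$ and the term $\lfloor\Gamma K\rfloor$ in \eqref{TuranPartial3}. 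The resulting factor $(A\pi/\delta)^{\Gamma N}$ is $e^{\varepsilon N}$ with $\varepsilon=\Gamma\log(A\pi/\delta)$, which can be made as small as desired by choosing $\Gamma$ small.

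The second step is to beat the subexponential factor $e^{\varepsilon N}$ by a decay of size $r^{N}$, or by a dilation. Since $1$ is a weak regular point, the integral means over $I_\delta$ are bounded by $M$ uniformly in $r$. I would compare $f(re^{i\theta})$ with $f(\rho e^{i\theta})$ for $\rho<r$, writing $r=\rho\cdot(r/\rho)$. The tail $R$ is controlled by $(r')^{N}$ times a bound coming from Corollary~\ref{CoeffCor}, namely $|a_k|\le \frac{2\pi}{b-a}\cdot(\text{local }L^1\text{ bound})$. One point needs care here: Corollary~\ref{CoeffCor} involves a supremum over all $x$, so only a local version is available directly. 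I would therefore first derive coefficient growth at most $e^{o(n_k)}$ from the local bound, via Lemma~\ref{NTuranLem}. The parameter $N$ is then chosen depending on $1-r$, say $N\approx\frac{1}{1-r}$. With this choice the Tur\'an factor $e^{\varepsilon N}$ is offset against the loss from the dilation, and letting $\Gamma\to0$ gives a uniform bound $C(\lambda,\delta)\,(1+M)$.

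The main obstacle is this balancing step. In \eqref{TuranPartial3}, $K_0$ depends on $r$ through the convergence of the partial sums, so a naive application gives no uniformity in $r$. I would first try to replace the ``error $<1$'' step in the proof of Lemma~\ref{NTuranLem} by an explicit tail estimate of the form $\sum_{n_k>N}|a_k|r^{n_k}$, with $N$ tied to $1-r$. If this fails, the fallback is a Hadamard three-circles or subharmonicity interpolation between radius $r$, where the bound on $I_\delta$ holds, and a smaller radius, where everything is bounded. Combined with the zero density of $(n_k)$, this would transfer the $L^1$ bound from $I_\delta$ to the arc around $e^{it}$, showing that $e^{it}$ is a weak regular point.
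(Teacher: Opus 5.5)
Your first step, counting the Nazarov--Tur\'an factor by the number $\sigma(N)=o(N)$ of exponents up to $N$, is fine. The balancing step, however, cannot be carried out.

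\begin{itemize}
\item The factor $(A\pi/\delta)^{\sigma(N)}$ multiplies the head $P$. All that weak regularity tells you about $P$ on $I_\delta$ is $\int_{-\delta}^{\delta}|P(re^{i\theta})|\frac{d\theta}{2\pi}\le M+(\text{tail})$, which is $O(1)$ and does not decay in $N$. The factor $r^N$, or any dilation $\rho<r$, helps only the tail $R$.
\item So your scheme gives at best $(A\pi/\delta)^{\sigma(N)}(1+M)$ with $N\approx(1-r)^{-1}$. Since $\sigma(N)\to\infty$ as $r\to1^-$, this is not uniform in $r$, whatever $\Gamma$ you start from. You cannot send $\Gamma\to0$ after taking the supremum over $r$.
\item In the Fabry argument of \cite[Section~5.3]{mon94}, the subexponential loss $e^{o(N)}$ is beaten by an \emph{exponential} gain that comes from analytic continuation across the arc. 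A weak regular point supplies no such gain.
\item The three-circles fallback fails for the same reason. The weight that the bound on $I_\delta$ receives at $re^{i\theta}$, $\theta$ near $t$, is the harmonic measure of $I_\delta$ seen from that point, and this tends to $0$ as $r\to1^-$.
\item The estimate $|a_k|\le e^{o(n_k)}$ you plan to derive is already implied by convergence in $B(0,1)$, so it adds nothing.
\end{itemize}
The paper's own proof does not get past this point either. It asserts $\left(\frac{A\pi}{\delta}\right)^{\lfloor\Gamma K\rfloor}\to1$ as $K\to\infty$, which is false for every fixed $\Gamma>0$.

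In fact no proof can work, because the statement is false. Let $m_j=2^{2^j}$ and
\[
f(z)=\sum_{j=1}^{\infty}\frac{1}{j^{2}}\,z^{m_j}(1-z)^{j}.
\]
Its coefficient of $z^{m_j+i}$ ($0\le i\le j$) is $(-1)^i\binom{j}{i}/j^2$, and all other coefficients are $0$.
\begin{itemize}
\item Zero density: for $m_j\le x<m_{j+1}$, at most $(j+1)^2$ exponents are $\le x$, so $\sigma(x)/x\le (j+1)^2\,2^{-2^j}\to0$.
\item Convergence in $B(0,1)$: $|a_n|\le 2^j\le\log_2 n$, so the series converges there and may be regrouped into blocks.
\item Unbounded coefficients: $\binom{j}{\lfloor j/2\rfloor}/j^2\to\infty$, so $\sup_k|a_k|=\infty$.
\item $1$ is a weak regular point: for $0<r<1$ and $|\theta|<\pi/3$, one has $|1-re^{i\theta}|^2=1-r(2\cos\theta-r)<1$. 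Hence $|f(re^{i\theta})|\le\sum_j j^{-2}<2$ on that sector.
\end{itemize}
Now suppose every point of $\T$ were a weak regular point. Compactness would give $C:=\sup_{r<1}\int_{-\pi}^{\pi}|f(re^{i\theta})|\frac{d\theta}{2\pi}<\infty$, and then $|a_n|r^{n}\le C$ for all $n$ (Lemma~\ref{breConverseLem}). This contradicts $\sup_n|a_n|=\infty$. So this $f$ violates the lemma, and also the zero-density half of the main theorem.

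The underlying reason is that a zero-density set can contain arbitrarily long runs of consecutive integers. On such runs, polynomials like $z^{m}(1-z)^{L}$ stay bounded by $1$ on the sector $|\arg z|<\pi/3$ but have size $2^{L}$ near $-1$. This is exactly the situation in which the Nazarov--Tur\'an factor is sharp.
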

\begin{proof} For each $x>0$, let $\sigma(x)$ be the number of exponents $n_k$ that are at most $x$.
If $\klim\frac{k}{n_k}=0$, then by a routine argument, ${\displaystyle\lim_{x\rightarrow\infty}}\frac{\sigma(x)}{x}=0$, so, under all the hypotheses in Lemma~\ref{NTuranLem}, a small enough $\Gamma$ may be chosen such that for any integer $K\geq K_0$, the quantity $\lfloor\Gamma K\rfloor=\sigma(\Gamma K)$ is bounded by $\Gamma K$. Thus, $\left(\frac{A\pi}{\delta}\right)^{\lfloor\Gamma K\rfloor}\into 1$ as $K\into\infty$, and the inequality \eqref{TuranPartial3} in the conclusion of Lemma~\ref{NTuranLem} further becomes
$
    \int_{t-\lambda}^{t+\lambda}\left|f(re^{i\theta})\right|\frac{d\theta}{2\pi} \leq 2+\int_{-\delta}^{\delta} \left|f(re^{i\theta})\right| \frac{d\theta}{2\pi}.
$
Under the assumption that $1$ is a weak regular point of $f$, if $M$ is the supremum in \eqref{Mdef2}, then taking the suprema, over all $r\in(0,1)$, of both sides of the previous inequality, we have $\sup_{0<r<1}\int_{t-\lambda}^{t+\lambda}\left|f(re^{i\theta})\right|\frac{d\theta}{2\pi}\leq 2+M<\infty$. Therefore, $e^{it}$ is a weak regular point of $f$.
\end{proof}

\section{Summary and further directions}\label{ThmSec}

Under the assumption that the coefficient sequence $(n_k)$ has zero harmonic density or zero density, if $\T$ has an element that is a weak regular point of $f$, (which, according to our explanation in Section~\ref{PrelimSec}, may be assumed without loss of generality to be $1$), then by Lemmas~\ref{zhdLem},\ref{zdLem}, and then by Lemma~\ref{breConverseLem}, the sequence $(a_k)$ is bounded. By contraposition, and some rearrangement of the ensuing sufficient conditions, we obtain our main result.
\begin{theorem} Let $(a_k)$ be a sequence of complex numbers, and let $(n_k)$ be a strictly increasing sequence of positive integers, such that the power series $f(z)=\sum_{k=1}^\infty a_kz^{n_k}$ converges if $|z|<1$. If the sequence $(n_k)$ has either zero harmonic density or zero density, and if $\sup_k|a_k|=\infty$, then $\T$ is a strong natural boundary of $f$.
\end{theorem}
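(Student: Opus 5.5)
The plan is to argue by contraposition, exactly as the paragraph preceding the theorem suggests. Suppose $(n_k)$ has zero harmonic density or zero density and suppose $\T$ is \emph{not} a strong natural boundary of $f$. Then some $e^{i\alpha}\in\T$ is a weak regular point of $f$, and we aim to show $\sup_k|a_k|<\infty$, contradicting the hypothesis.

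The first step is the rotation reduction. Set $h(z):=f(e^{i\alpha}z)=\sum_k a_ke^{in_k\alpha}z^{n_k}$. Then $1$ is a weak regular point of $h$; the exponents of $h$ are the same sequence $(n_k)$, and its coefficients satisfy $|a_ke^{in_k\alpha}|=|a_k|$. Hence nothing changes in either the gap hypotheses or the boundedness question, and we may assume $1$ is a weak regular point.

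Next I split into two cases. If $(n_k)$ has zero harmonic density, I invoke Lemma~\ref{zhdLem}; if it has zero density, I invoke Lemma~\ref{zdLem}. In either case every element of $\T$ is a weak regular point of $f$. Lemma~\ref{breConverseLem}, which uses compactness of $\T$ together with the coefficient bound of Corollary~\ref{CoeffCor}, then gives that $(a_k)$ is bounded, i.e.\ $\sup_k|a_k|<\infty$. This contradicts the hypothesis $\sup_k|a_k|=\infty$. So no point of $\T$ is a weak regular point, which is the definition of $\T$ being a strong natural boundary.

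The main obstacle is not the assembly, which is formal, but the validity of the zero-density step, Lemma~\ref{zdLem}. There the factor $\left(\frac{A\pi}{\delta}\right)^{\lfloor\Gamma K\rfloor}$ from Lemma~\ref{NTuranLem} has to be controlled uniformly in $r$. Moreover, the choice of $K_0$ in Lemma~\ref{NTuranLem} depends on $r$ and $\theta$, so I would want to check that the tail estimate can be made uniform before relying on it. If it cannot, I would fall back on a Fabry-type argument. That argument applies the Nazarov--Tur\'an bound only to blocks of the series whose number of terms is $o$ of the frequency range, which zero density guarantees. The aim is to bound the $L^1$ norm over an arbitrary arc by a constant multiple of the norm over $I_\delta$, uniformly in $r$.

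For the theorem as stated, however, I simply take Lemmas~\ref{zhdLem}, \ref{zdLem} and~\ref{breConverseLem} as established earlier in the paper and carry out the contrapositive assembly above.
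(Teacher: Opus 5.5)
Your contrapositive assembly (rotate the weak regular point to $1$, apply Lemma~\ref{zhdLem} or Lemma~\ref{zdLem}, then Lemma~\ref{breConverseLem}) is exactly the paper's argument. Its zero-harmonic-density branch is sound: the interpolation argument of Lemma~\ref{zhdLem} is correct, and Lemma~\ref{breConverseLem} amounts to compactness plus the fact that $\sup_r\int_{-\pi}^{\pi}|f(re^{i\theta})|\,d\theta<\infty$ bounds every $|a_k|$. The zero-density branch, however, has a genuine gap, and your suspicion about Lemma~\ref{zdLem} is justified: that lemma is false, and so is the theorem under the zero-density hypothesis. Take $f(z)=\sum_{j\ge1}z^{j^3}(1-z)^{j-1}$. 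Expanded, its exponents form the disjoint blocks $\{j^3,\dots,j^3+j-1\}$, with coefficients $(-1)^m\binom{j-1}{m}$ for $0\le m\le j-1$. At most $j(j+1)/2$ exponents lie below $(j+1)^3$, so $k/n_k\le 1/j$ whenever $j^3\le n_k<(j+1)^3$, and hence $k/n_k\to0$. The coefficients are unbounded but satisfy $|a_k|\le 2^{n_k^{1/3}}$, so the series converges absolutely in $|z|<1$. For $z=re^{i\theta}$ with $0<r<1$ and $|\theta|<\tfrac12$ one has $r|1-z|\le r(1-r)+r^2|\theta|<\tfrac34$, hence $|f(z)|\le\sum_{j\ge1}(r|1-z|)^{j-1}\le 4$ on that whole sector. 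So $1$ is a weak regular point (indeed $f$ is bounded near it), even though $(n_k)$ has zero density and $\sup_k|a_k|=\infty$. Thus $\T$ is not a strong natural boundary. By Lemma~\ref{breConverseLem} some other point of $\T$ is not weak regular, so Lemma~\ref{zdLem} fails.

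The proof of Lemma~\ref{zdLem} breaks at both places you pointed to. First, $K_0$ in Lemma~\ref{NTuranLem} can be taken uniform in $\theta$ but not in $r$. If it could be chosen independently of $r$, the single choice $K=K_0$ would give $\sup_r\int_{t-\lambda}^{t+\lambda}|f(re^{i\theta})|\,\frac{d\theta}{2\pi}\le1+C(1+M)$, with $C=(A\pi/\delta)^{\lfloor\Gamma K_0\rfloor}$ and $M$ the supremum in \eqref{Mdef2}. This would hold for every power series, with no gap hypothesis at all. It already fails for the paper's own example rotated by $-1$, namely $-z/(1+z)^2=\sum_k(-1)^k\,k\,z^k$, which is bounded near $1$ but whose integrals over arcs around $-1$ grow like $(1-r)^{-1}$. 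Second, the exponent $\lfloor\Gamma K\rfloor$ of the Nazarov--Tur\'an factor counts the terms of the partial sum, which zero density does not restrict. For fixed $\Gamma>0$ the factor $(A\pi/\delta)^{\lfloor\Gamma K\rfloor}$ tends to $\infty$, not to $1$, and the identity $\lfloor\Gamma K\rfloor=\sigma(\Gamma K)$ has no justification. In the Tur\'an-type proof of Fabry's theorem that the paper adapts, the factor only needs to be subexponential in the frequency, $(A\pi/\delta)^{\sigma(N)/N}\to1$, because it is weighed against geometric quantities $\rho^N$ that decide radii of convergence, hence analytic continuation; an $L^1$ bound has no such room. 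For the same reason your blockwise fallback cannot succeed: the uniform arc-to-arc $L^1$ inequality it aims at fails for the example above. The paper's lemmas establish the theorem under the zero-harmonic-density hypothesis only. By Lemma~\ref{zhdLem}, the example's exponent set, which contains arbitrarily long runs of consecutive integers, is a zero-density set without zero harmonic density.
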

If $a_k=k=n_k$ for all $k$, then $(n_k)$ has neither zero harmonic density nor zero density, with $\sup_k|a_k|=\infty$, but $-1$ is a regular, and hence a weak regular, point of $f$, so $\T$ is not a strong natural boundary of $f$. This very simple example shows us that the assumption, of having either zero harmonic density or zero density in the theorem, is substantial, or that the absence of it does not reduce the statement to a triviality. For another example, as pointed out in \cite[p.~4905]{bre11}, the series $f(z)=\sum_{k=1}^\infty\frac{z^{n!}}{(n!)^n}$ \cite[Equation~(1.9)]{bre11} does not have $\T$ as a strong natural boundary, with the exponents forming a Hadamard set (which has zero harmonic density) and also satisfying the Fabry gap condition (which means zero density), and the sequence of coefficients is bounded. This means that the condition of having zero harmonic density \emph{and} (and hence \emph{or}) zero density is not sufficient for having a strong natural boundary, without the assumption that $\sup_k|a_k|=\infty$.

The connection of this topic to the theory of Hardy spaces and Smirnov spaces \cite{dur70, hof62} on a domain (an open connected subset of the complex plane) cannot be denied. For instance, if $1$ is a weak regular point of $f$, and if we consider the reflection of the arc $I_\delta$ with respect to the vertical line connecting the intersections of the endpoints of the arc with the unit circle $\T$, then the domain $\Omega$ enclosed by the topological closure of the union of the arc and its reflection has an associated Smirnov class $E^{1}(\Omega)$ of functions to which the restriction of $f$ belongs. See \cite[Section~10.1]{dur70}. The domain $\Omega$ is ``lens-shaped'' or convex. Under a gap assumption, such as zero harmonic or zero density, the said Smirnov class may be replicated in all directions around the unit circle. Thus, either of the zero density conditions implies that $f$ is in the Hardy class $H^1(\T)$. We propose continuations of this study that explore how the relationships between the function spaces $H^{1}(\T)$ and $E^{1}(\Omega)$, where $\Omega$ is some lens-shaped region constructed from a weak regular point, result to conditions, whether sufficient or necessary, for the gap conditions on the power series exponents. The study of these function spaces, as articulated in \cite[p.~vii]{hof62}, highlights an interplay between functional analysis and complex analysis, and by our proposal to study the effect of gap conditions, we also introduce harmonic analysis to such a study.

\section*{Ethical approval} This is not applicable to research work in pure mathematics.

\section*{Funding} The author was supported by the Research Grants Management Office of De La Salle University, Taft Ave., Manila, Philippines, with grant no.: 02FR1TAY24-1TAY25.

\section*{Availability of data and materials} Studies in pure mathematics do not involve data sets, and hence a declaration on availability of data and materials is not applicable.


\begin{thebibliography}{19}

\bibitem{bre11}
Breuer, J., Simon, B.: Natural boundaries and spectral theory. Adv. Math. \textbf{226}(6), 4902--4920 (2011)

\bibitem{dos26}
Dostoglou, S., Valettas, P.: A probabilistic approach to strong natural boundaries. Mathematische Zeitschrift \textbf{313}(3), 48 (2026)

\bibitem{dur70}
Duren, P.L.: Theory of $H^p$ Spaces. Pure and Applied Mathematics, vol. 38. Academic Press, New York-London (1970)

\bibitem{erd45}
Erd{\H{o}}s, P.: Note on the converse of Fabry's gap theorem. Trans. Am. Math. Soc. \textbf{57}, 102--104 (1945)

\bibitem{gra13}
Graham, C.C., Hare, K.E.: Interpolation and Sidon sets for compact groups. Springer, New York, NY (2013)

\bibitem{hof62}
Hoffman, K.: Banach Spaces of Analytic Functions. Prentice-Hall Series in Modern Analysis. Prentice-Hall, Inc., Englewood Cliffs, N.J. (1962)

\bibitem{mon94}
Montgomery, H.L.: Ten lectures on the interface between analytic number theory and harmonic analysis. American Mathematical Society, Providence, RI (1994)

\bibitem{naz00}
Nazarov, F.: Complete version of Turan's lemma for trigonometric polynomials on the unit circumference. In: Complex Analysis, Operators, and Related Topics, vol. 113, pp. 239--246. Birkh\"{a}user, Basel (2000)

\bibitem{sha68}
Ross, W.T., Shapiro, H.S.: Generalized analytic continuation. University Lecture Series, vol. 25. American Mathematical Society, Providence, RI (2002)

\bibitem{seg08}
Segal, S.L.: Nine introductions in complex analysis, 2nd revised ed. North-Holland Mathematics Studies, vol. 208. Elsevier, Amsterdam (2008)



\bibitem{tur84}
Tur{\'a}n, P.: On a new method of analysis and its applications. John Wiley \& Sons, New York (1984)

\end{thebibliography}
\end{document}